%


\newcommand{\notes}[1]{}
\newcommand{\vekk}[1]{}
\newcommand{\delete}[1]{}
\newcommand{\gtcomment}[1]{}

\RequirePackage{ifpdf} 

\ifpdf
\documentclass[american,12pt]{article}
\else
\documentclass[american,12pt]{article}
\fi

\usepackage{babel}




\usepackage[colorlinks=true,linkcolor=blue,citecolor=blue]{hyperref}
\usepackage{doi}

\usepackage{natbib}

\usepackage{color} 

\usepackage[latin1]{inputenc} 
\usepackage{latexsym}

\usepackage{amsmath}
\usepackage{amsfonts}
\usepackage{amssymb}

\usepackage{amsthm}
\usepackage{float}



\newtheorem{Def}{Definition}

\newtheorem{prop}{Proposition} 
\newtheorem{cor}{Corollary} 




\newcommand{\tP}{\mathop{\text{P}}}

\newcommand{\sX}{\mathcal{X}}
\newcommand{\sY}{\mathcal{Y}}
\newcommand{\sZ}{\mathcal{Z}}

\newcommand{\calF}{{\cal F}} 
\newcommand{\calG}{{\cal G}} 

\newcommand{\cmeasure}{C-measure}

 %
 %

 %
 %
 %
 %
 %
 %
 %
 %

\newcommand{\ceps}{{\cal E}} 
\newcommand{\cb}{{\cal B}} 
\newcommand{\ca}{{\cal A}} 
\newcommand{\cf}{{\cal F}} 

 %

\newcommand{\pr}{\operatorname{\text{P}}} 


 %
 %
 %
 %

 %
 %
 %
 %
 %
 %
 %
 %
 %
 %
 %
 %
 %
 %
 %
 %



 
\newcommand{\st}{\operatornamewithlimits{{\mid}}}
 








\newcommand{\nc}{\newcommand}
\nc{\beqns}{\begin{eqnarray*}}
\nc{\eeqns}{\end{eqnarray*}}
\nc{\beqn}{\begin{eqnarray}}
\nc{\eeqn}{\end{eqnarray}}
\nc{\beq}{\begin{equation}}
\nc{\eeq}{\end{equation}}

\newcommand{\bes}{ \begin{equation} \begin{split} } 
\newcommand{\ees}{ \end{split} \end{equation} } 

 






 

\newcommand{\into}{\mbox{$\: \rightarrow \:$}}



                


\newcommand{\imply}{\Rightarrow}

\newcommand{\RealN}{{\mbox{$\Bbb R$}}} 





\ifpdf
\begin{document}

\else
\usepackage{graphicx}
\begin{document}
\DeclareGraphicsExtensions{.eps} 
\fi

\newcommand{\change}[1]{{\color{red} #1}}




\vekk{
\setlength{\oddsidemargin}{0in}
\setlength{\evensidemargin}{0in}
\setlength{\topmargin}{-.5in}
\setlength{\headsep}{0in}
\setlength{\textwidth}{6.5in}
\setlength{\textheight}{8.5in}
\def\refhg{\hangindent=20pt\hangafter=1}
\def\refmark{\par\vskip 2mm\noindent\refhg}
\def\refhg{\hangindent=20pt\hangafter=1}
\def\refmark{\par\vskip 2mm\noindent\refhg}
\def\refhg{\hangindent=20pt\hangafter=1}    
\def\refhgb{\hangindent=10pt\hangafter=1}
\def\refmark{\par\vskip 2mm\noindent\refhg}
\renewcommand{\baselinestretch}{1.5}
}

\title{Conditional probability and improper priors}
\author{Gunnar Taraldsen and Bo H. Lindqvist (2016)\\
  Communications in Statistics - Theory and Methods,\\ 45:17, 5007-5016,
  \doi{10.1080/03610926.2014.935430}}

\date{}

\maketitle

\tableofcontents

\newpage

\begin{abstract}
According to Jeffreys improper priors are needed to get the Bayesian machine up and running.
This may be disputed, but usage of improper priors flourish.
Arguments based on symmetry or information theoretic reference
analysis can be most convincing in concrete cases.
The foundations of statistics as usually formulated rely on 
the axioms of a probability space, 
or alternative information theoretic axioms that imply 
the axioms of a probability space.
These axioms do not include improper laws, 
but this is typically ignored in papers that consider improper priors.  

The purpose of this paper is to present a mathematical theory that can
be used as a foundation for statistics that include improper priors.
This theory includes improper laws in the initial axioms and has in
particular Bayes theorem as a consequence.
Another consequence is that some of the usual calculation 
rules are modified.
This is important in relation to common statistical practice which usually
include improper priors, 
but tends to use unaltered calculation rules.
In some cases the results are valid,
but in other cases inconsistencies may appear.
The famous marginalization paradoxes
exemplify this latter case.

An alternative mathematical theory for the foundations of statistics
can be formulated in terms of conditional probability spaces.
In this case the appearance of improper laws is a consequence of the theory.
It is proved here that the resulting mathematical structures for the
two theories are equivalent.
The conclusion is that the choice of the first or the second
formulation for the initial axioms can be considered a matter of
personal preference.
Readers that initially have concerns regarding improper priors can
possibly be more open toward a formulation of the initial axioms in
terms of conditional probabilities.    
The interpretation of an improper law is given by the corresponding
conditional probabilities.
\end{abstract}

\vspace*{.3in}

\noindent\textsc{Keywords}: {
{Axioms of statistics},
{Conditional probability space},
{Improper prior},
{Projective space}
}

\vfill

\newpage

\section{Introduction}

Statistical developments are driven by applications, 
theory, 
and most importantly the interplay between applications and theory.
The following 
 is intended for readers that can appreciate 
the importance of the theoretical foundation for probability theory as
given by the axioms of \citet{KOLMOGOROV}.
According to the recipe of Kolmogorov a random element in a set $\Omega_X$ is
identified with a measurable function $X: \Omega \into \Omega_X$ where
$(\Omega, \ceps, \pr)$ is the basic probability space that the whole
theory is based upon.

The use of improper priors is common in the statistical literature,
but most often without reference to a corresponding theoretical foundation.
It will here be explained how the theory of conditional probability
spaces as developed by \citet{RENYI} is
related to a theory for statistics that includes improper priors.
This theory has been presented in a simplified form with some elementary examples by
\citet{TaraldsenLindqvist10ImproperPriors}.
The idea is to use the above recipe given by Kolmogorov,
but generalized by assuming that $(\Omega, \ceps, \pr)$ 
is defined by the use of a $\sigma$-finite measure.
The underlying law $\pr$ itself is not a $\sigma$-finite measure,
but it is an equivalence class of $\sigma$-finite measures.
A more precise formulation is given by Definition~\ref{dCondMeas} below.

In oral presentations of the theory related to improper priors it is
quite common that someone in the audience makes the following claim:
{\em Improper priors are just limits of proper priors, 
so we need not consider improper priors.
}
We strongly disagree with this even though it is true that
improper priors can be obtained as limits of proper priors.
The reason is perhaps best explained by analogy with a more familiar example:
It is true that the real number system is obtained as
a limit of the rational numbers.
A precise construction, and this is important, 
is found by the aid of equivalence classes of Cauchy sequences. 
Nonetheless, most people prefer to think of real numbers as such
without reference to rational numbers.
Real numbers are just limits of rational numbers,
but we use them with properties as given by the axioms of the real
number system.

The reader will not find any new algorithms or methods for the
solution of practical problems here.
The presented theory can, however, be used to put many known solutions
to practical problems on a more solid theoretical foundation.
This additional effort is necessary to avoid and explain 
contra-dictionary results as exemplified by for instance the famous 
marginalization paradoxes \citep{StoneDawid72,DawidStoneZidek73}. 
The theory also gives a natural frame 
for the proof of optimality of inference based on fiducial theory
\citep{TaraldsenLindqvist13fidopt} and
a proof of coincidence of a fiducial distribution 
and certain Bayesian posteriors based on improper priors \citep{TaraldsenLindqvist13fidpost}.
The theory has also been used for a rigorous specification of
intrinsic conditional autoregression models \citep{LavineHodges12icar}. 
These models are widely used in spatial statistics, dynamic linear
models, and elsewhere.


\section{Conditional measures}
\label{sCondMeas}

The aim in the following is to formulate a theory that can be used to 
provide a foundation for statistics that includes improper priors.
A less technical presentation of this with some elementary examples 
has already been provided by
\citet{TaraldsenLindqvist10ImproperPriors}.
They show in particular by examples that this theory is different
from the alternative theory for improper priors provided by \citet{HARTIGAN}.
This section gives a condensed presentation of the 
mathematical ingredients.
Most definitions are standard as
presented by for instance \citet{RUDIN},
but some are not standard and are emphasized in the text.
An example is the concept of a {\em \cmeasure} as introduced below. 

Let $\sX$ be a nonempty set, 
and $\calF$ a family of subsets that includes the
empty set $\emptyset$.
The family $\calF$ is a $\sigma$-field if it is closed
under formation of countable unions and complements.
A set is measurable if it belongs to $\calF$.
A measurable space $\sX$ is a set $\sX$ equipped with a 
$\sigma$-field $\calF$.
The same symbol $\sX$ is here used to denote both the set and the space.
The notation $(\sX, \calF)$ is also used to denote the measurable space. 
The convention here is to use the term {\em space} to
denote a set with some additional structure.

A measure space $(\sX, \calF, \mu)$ is a measurable space $(\sX, \calF)$ equipped with a measure $\mu$.
A measure is a function $\mu: \calF \into [0,\infty]$ 
that is countably additive:
The equality
%
$
\mu (\bigcup_i A_i) = \sum_i \mu (A_i)
$
%
holds for disjoint measurable $A_1, A_2, \ldots$.
\begin{Def}
Let $(\sX, \calF, \mu)$ be a measure space.
An admissible condition $A$ is a measurable set with
$0 < \mu A < \infty$.
\end{Def}
The measure space $(\sX,\calF,\mu)$ is $\sigma$-finite
if there exists a sequence $A_1, A_2, \ldots$ of admissible conditions such
that $\sX = \cup_i A_i$. 
A probability space is a 
measure space $(\sX, \calF, \mu)$ 
such that $\mu (\sX) = 1$,
and $\mu$ is then said to be a probability measure.

Consider the set $M$ of all $\sigma$-finite measures on a 
fixed measurable space $(\sX,\calF)$.
The set $M$ includes in particular all probability measures,
and the following gives new concepts also when restricted to
probability measures.
Two elements $\mu$ and $\nu$ in $M$ are defined to be
equivalent if $\mu = \alpha \nu$ for some positive number $\alpha$.
This defines an equivalence relation $\sim$ and the
quotient space $M / \sim$ defines the set of
{\em \cmeasure{}s}.
It should in particular be observed that any topology on $M$
induces a corresponding quotient topology on $M / \sim$.
Convergence of \cmeasure{}s is an important topic,
but the study of this is left for the future.
Some further discussion will, however, be provided
in Section~\ref{sRenyi}.
A {\em \cmeasure \ space} $(\sX,\calF,\gamma)$ is
a  measurable space $(\sX,\calF)$ equipped with a \cmeasure\ $\gamma$.
This means that 
$\gamma = [\mu] = \{\nu \st \nu \sim \mu\}$ is
an equivalence class of $\sigma$-finite measures.
The term {\em conditional probability space}
will here be used as an equivalent term.
This convention will be motivated next,
and is further elaborated in Section~\ref{sRenyi}.

Let $(\sX, \calF, \gamma)$ be a conditional probability space, 
and let $A$ be an admissible condition.
The conditional law,
or equivalently the {\em conditional measure}, 
$\gamma (\cdot \st A)$ 
is then well defined by $\gamma (B \st A) = \nu (B \st A) = \nu (B \cap A) / \nu(A)$
where $\nu \in \gamma$.
It is well defined since it does not depend on the choice of $\nu$,
and the resulting conditional law is a probability measure.
This argument gives:
\begin{prop}
A conditional probability space $(\sX,\calF,\gamma)$ defines a
unique family of probability spaces indexed by the admissible sets:
$\{(\sX, \calF, \gamma(\cdot \st A)) \st A \text{ is admissible}\}$.
\end{prop}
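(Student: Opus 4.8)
The plan is to unpack the three things implicit in the statement: that the index set ``admissible sets'' is intrinsic to $\gamma$, that each $\gamma(\cdot \st A)$ is a well-defined probability measure, and that the assignment $A \mapsto (\sX,\calF,\gamma(\cdot \st A))$ depends on $\gamma$ alone, which is what ``unique family'' means. The proof is almost entirely bookkeeping about the equivalence class $\gamma = [\mu]$, and the one recurring point requiring care is invariance under the rescaling $\nu \mapsto \alpha\nu$.

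First I would check that the property ``$A$ is admissible'' is a property of the \cmeasure\ $\gamma$, not of a chosen representative. If $\nu,\nu' \in \gamma$, then $\nu' = \alpha\nu$ for some $\alpha > 0$, so $\nu' A = \alpha\,\nu A$; hence $0 < \nu A < \infty$ if and only if $0 < \nu' A < \infty$. Thus the collection of admissible sets is well defined, and the family displayed in the statement is indexed by an intrinsically defined set.

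Next, fix an admissible $A$ and a representative $\nu \in \gamma$, and put $\gamma(B \st A) := \nu(B \cap A)/\nu(A)$ for $B \in \calF$. I would first observe independence of the representative: replacing $\nu$ by $\nu' = \alpha\nu$ scales numerator and denominator by the same $\alpha > 0$, leaving the ratio unchanged. Then I would verify the measure axioms for $\gamma(\cdot \st A)$: nonnegativity and $\gamma(\emptyset \st A) = 0$ are immediate, and countable additivity follows because $B \mapsto \nu(B \cap A)$ is countably additive (disjoint $B_i$ yield disjoint $B_i \cap A$) and division by the positive constant $\nu(A)$ preserves additivity. Since $\gamma(\sX \st A) = \nu(A)/\nu(A) = 1$, the triple $(\sX,\calF,\gamma(\cdot \st A))$ is a probability space. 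Note that $\sigma$-finiteness of the representatives plays no role here; only $0 < \nu A < \infty$ is used.

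For uniqueness, I would simply remark that the whole family is produced by a formula referring only to $\gamma$ and $A$, the sole apparent choice being the representative $\nu$, which the previous step shows to be immaterial; hence the map sending each admissible $A$ to $(\sX,\calF,\gamma(\cdot \st A))$ is completely determined by $\gamma$. I do not expect a genuine obstacle: the argument is the scaling computation above, repeated once for the index set and once for the conditional laws.
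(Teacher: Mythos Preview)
Your proof is correct and follows essentially the same approach as the paper: the paper's argument, given in the paragraph immediately preceding the proposition, simply notes that $\gamma(B\st A)=\nu(B\cap A)/\nu(A)$ is independent of the representative $\nu\in\gamma$ and is a probability measure. You have merely filled in the routine verifications (including the nice observation that admissibility itself is representative-independent) that the paper leaves implicit.
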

For ease of reference we restate the following definition.
\begin{Def}
\label{dCondMeas}
Let $(\sX,\calF)$ be a measurable space.
A conditional measure
$\gamma = [\nu] = \{\alpha \nu \st 0<\alpha<\infty, \nu \text{ is $\sigma$-finite}  \}$ 
is a set of $\sigma$-finite measures defined by a $\sigma$-finite
measure $\nu$ defined on $(\sX,\calF)$.
Let $A$ be an admissible condition and let $B$ be measurable. 
The formula $\gamma (B \st A) = \nu (B \cap A) / \nu (A)$ defines the
conditional probability of $B$ given $A$.
A conditional measure space $(\sX,\calF,\gamma)$ is
a  measurable space $(\sX,\calF)$ equipped with a conditional measure $\gamma$.
\end{Def}
In general probability and statistics it is most useful to extend the
definition of conditional probability and expectation to
include conditioning on $\sigma$-fields and statistics.
This can be done also in the more general context here.
The main new ingredient is
given by the definition
of $\sigma$-finite $\sigma$-fields and $\sigma$-finite measurable functions.


Let $(\sX, \cf, \mu)$ be a measure space.
Assume that $\cf_1 \subset \cf$ is a 
{\em $\sigma$-finite $\sigma$-field}
in the sense that
$(\sX, \cf_1, \mu_1)$ is $\sigma$-finite where
$\mu_1$ is the restriction of $\mu$ to $\cf_1$.
This implies that $(\sX, \cf, \mu)$ is also $\sigma$-finite.
Let $A \in \cf$. 
The conditional measure $\mu (\cdot \st \cf_1)$
is defined by the $\cf_1$ measurable function
$x \mapsto \mu (A \st \cf_1)(x)$
uniquely determined by the relation
\begin{equation}
\label{eqCondDefSigma}
\mu (A \cap B) = \int_B \mu (A \st \cf_1)(x)\, \mu_1 (dx) 
\end{equation}
which is required to hold for all measurable subsets $B \in \cf_1$. 

The existence and uniqueness proof follows by observing:
(i) $\nu B = \mu (A B)$ defines a measure on $\cf_1$.
(ii) $\nu$ is dominated by the measure $\mu_1$.
(iii) The Radon-Nikodym theorem 
gives existence and uniqueness of the conditional 
$\mu (A \st \cf_1)$ as the density of $\nu$ with respect to $\mu_1$
so the claim
$\nu (dx) = \mu (A \st \cf_1)(x) \mu_1 (dx)$ follows.
The uniqueness is only as a measurable function defined
on the measure space $(\sX, \cf_1, \mu_1)$
and the conditional probability is more properly 
identified with an equivalence class of measurable functions.

The defining equation~(\ref{eqCondDefSigma})
shows that $\mu (A \st \cf_1) = (\alpha \mu) (A \st \cf_1)$ 
for all $\alpha > 0$.
It can be concluded that the conditional measure 
$\gamma (A \st \cf_1)$ is well defined if
$(\sX, \cf, \gamma)$ is a conditional probability space.
An immediate consequence is
$\gamma (X \st \cf_1) = 1$, 
so the conditional measures are all normalized.  
The term {\em conditional probability} will motivated by this
be used as equivalent to the term {\em conditional measure}.
This is as above for the elementary conditional measure.

The following example demonstrates that this conditional probability 
generalizes the elementary conditional probabilities 
$\gamma (A \st B)$.
Let $\cf_1$ be the $\sigma$-field generated by 
a countable partition of  
$\sX$ into disjoint admissible sets $A_1, A_2, \ldots$.
It follows that $\cf_1$ is $\sigma$-finite.
Assuming this the conditional expectation is given by
$\gamma (A \st \cf_1) (x) = \gamma (A \st A_i)$ for $x \in A_i$.


The definitions presented so far lead naturally to the definition
of the category of conditional probability spaces with
a corresponding class of arrows.
The study of this, 
and functors to related categories, will not be pursued here.
This more general theory gives, however, 
alternative motivation
for some of the concepts presented next.

A function $\phi: \sX \into \sY$ is measurable if the inverse image of
every measurable set is measurable:
$(\phi \in A) = \phi^{-1} (A) = \{x \st \phi(x) \in A\}$ is
measurable for all measurable $A$.
Let $\mu$ be a measure on $\sX$.
The image measure $\mu_\phi$ is defined by
\begin{equation} 
\mu_\phi (A) = \mu (\phi \in A)
\end{equation}
{\em A measurable function $\phi$ is by definition $\sigma$-finite if $\mu_\phi$ is $\sigma$-finite.}
A direct verification shows
that a $\sigma$-finite function $\phi: \sX \into \sY$
pushes a conditional probability space structure on $\sX$ into a 
conditional probability space
structure on $\sY$. 
This follows from the above and the identity
$[\mu]_\phi = [\mu_\phi]$. 
Consequently, if $\gamma$ is a \cmeasure, then the
\cmeasure\ $\gamma_\phi$ is well defined if
$\phi$ is $\sigma$-finite.
The definition given here of a  $\sigma$-finite function
is a generalization of the concept of a 
regular random variable as defined by
\citet[p.73]{RENYI}. 
The definition of $\sigma$-finite $\sigma$-fields and $\sigma$-finite
measurable functions can be reformulated as follows.
\begin{Def}
Let $(\sX, \calF, \mu)$ be a $\sigma$-finite measure space.
A $\sigma$-field $\calF_1 \subset \calF$ is $\sigma$-finite
if $\mu$ restricted to $\calF_1$ is $\sigma$-finite.
Let $(\sY, \calG)$ be a measurable space.
A measurable
$\phi: \sX \into \sY$ 
is $\sigma$-finite if the $\sigma$-field
$\calF_1 = \{\{x \st \phi (x) \in A\} \st A \in \calG\}$ is $\sigma$-finite.
\end{Def}
The previous arguments show that the $\sigma$-finite functions
can play the role as arrows in the category of conditional probability spaces.
The $\sigma$-finite functions can also be used to
define conditional probabilities just as
$\sigma$-finite $\sigma$-fields did in
equation~(\ref{eqCondDefSigma}).
It will be a generalization since the previous definition is
obtained by consideration of the function 
$x \mapsto x$
as a function taking values in the space equipped with the 
$\sigma$-finite $\sigma$-field in the construction that follows.

Assume that $\delta: \sX \into \sZ$ is $\sigma$-finite.
The conditional probability $\mu^z (A) = \mu (A \st \delta = z)$ is defined by the relation
\begin{equation}
\label{eqCondDef}
\mu (A [\delta \in B]) = \int \mu (A \st \delta = z) [z \in B]\, \mu_\delta (dz) 
\end{equation}
which is required to hold for all measurable subsets $B \subset \sZ$. 
The existence and uniqueness proof follows by an argument
similar to the argument after equation~(\ref{eqCondDefSigma}).
The identity $[\mu]^z = \mu^z$ holds and 
{\em the conditional measure $\gamma^z$ is well defined for a \cmeasure\ $\gamma$}.

Composition of the functions
$x \mapsto \delta (x) = z$ and
$z \mapsto \mu^z (A)$ defines 
the conditional probability
$\mu (A \st \delta)$ as a measurable function
defined on $\sX$.
This function is measurable with respect to
the initial $\sigma$-field $\cf_\delta \subset \cf$ generated by $\delta$.
The $\sigma$-finiteness of $\delta$ is equivalent with the
$\sigma$-finiteness of $\cf_\delta$.
Direct verification shows that the definitions of 
conditional probability given by equations~(\ref{eqCondDefSigma}) and (\ref{eqCondDef})
coincide in the sense that
$\mu (A \st \cf_\delta) = \mu (A \st \delta)$. 

The conclusion is that a conditional probability space
$(\sX,\calF, \gamma)$ is not only equipped with the
family of elementary conditional probabilities 
$\{\gamma (\cdot \st A) \st A \in \ca\}$, 
but also a family
$\{\gamma^z \st z \in \sZ\}$ of conditional probabilities for each 
$\sigma$-finite $\delta: \sX \into \sZ$. 

{\em The conditional probability $\mu_\phi^z$ on $\sY$} is defined by
\begin{equation}
\label{eqCondProbY}
\mu_\phi^z (A) = \mu^z (\phi \in A) = \mu (\phi \in A \st \delta = z)
\end{equation}
The function $\delta$ must be $\sigma$-finite,
but it is not required that $\phi : \sX \into \sY$ is $\sigma$-finite.
It follows that
\begin{equation}
\label{eqCondFact}
\mu_{\phi,\delta} (dy,dz) = \mu_\phi^z (dy) \mu_\delta (dz)
\end{equation}
The case $\sX=\sY$ and $\phi (x) = x$ gives the defining
equation~(\ref{eqCondDef})
as a special case of the more general factorization given by
equation~(\ref{eqCondFact}).

The previous discussion can be summarized by:
\begin{prop}
A conditional measure space $(\sX,\calF,\gamma)$,
a $\sigma$-finite 
$\delta: \sX \into \sZ$,
and a measurable $\phi: \sX \into \sY$
define a unique measurable family 
$\{\gamma_\phi^z \st z \in \sZ\}$
of conditional measures
defined on $\sY$.
\end{prop}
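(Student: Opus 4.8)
\medskip
\noindent\textbf{Proof plan.}
The plan is to reduce the statement to the two constructions already in hand --- the Radon--Nikodym construction of $\mu^z$ underlying equation~(\ref{eqCondDef}) and the push-forward~(\ref{eqCondProbY}) --- and then to verify that the outcome depends only on the equivalence class $\gamma$, not on the chosen representative. So I would first fix a $\sigma$-finite $\nu \in \gamma$; since $\delta$ is $\sigma$-finite, $\nu_\delta$ is a $\sigma$-finite measure on $(\sZ,\calG)$, which is exactly the hypothesis the construction after~(\ref{eqCondDef}) needs, and this yields the probability measures $\nu^z$ on $\sX$. Composing with $\phi$ as in~(\ref{eqCondProbY}) then produces the candidate family $\nu_\phi^z = (\nu^z)_\phi$ on $\sY$. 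The proposition being a summary of the preceding discussion, the work is mostly in assembling these pieces cleanly.

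For existence and measurability I would argue as follows. Fix a measurable $A \subset \sY$; the set function $B \mapsto \nu(\{\phi \in A\}\cap\{\delta \in B\})$ is a measure on $(\sZ,\calG)$ bounded above by $\nu_\delta$, hence dominated by it, and Radon--Nikodym gives a density, unique up to $\nu_\delta$-null sets, that I take as $z \mapsto \nu_\phi^z(A)$; this makes $z \mapsto \nu_\phi^z(A)$ measurable and encodes the defining relation~(\ref{eqCondProbY}), equivalently the factorization~(\ref{eqCondFact}). That $A \mapsto \nu_\phi^z(A)$ is, for $\nu_\delta$-almost every $z$, countably additive with total mass $\nu_\phi^z(\sY)=\nu^z(\sX)=1$ then follows exactly as in the arguments after~(\ref{eqCondDefSigma}) and~(\ref{eqCondDef}): monotone convergence along a countable family of sets determining a measure, discarding a countable union of null sets, and selecting a version. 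I would stress here that $\phi$ need not be $\sigma$-finite: $\nu^z$ is already a probability measure on $\sX$, so its image $(\nu^z)_\phi$ is an honest probability measure on $\sY$ with no further hypothesis, which is precisely why the conditioning is carried out first and the push-forward only afterwards.

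Next I would check independence of the representative and uniqueness. Replacing $\nu$ by $\alpha\nu$ with $\alpha>0$ scales both sides of~(\ref{eqCondProbY}) and the reference measure $\nu_\delta$ by the same $\alpha$, so the Radon--Nikodym derivative, hence $\nu^z$ and $\nu_\phi^z$, is unchanged; thus $\gamma_\phi^z := [\nu]_\phi^z = [\nu_\phi^z]$ is well defined, each $\gamma_\phi^z$ being the class of a probability measure and so a genuine conditional measure on $\sY$. For uniqueness, if $\{\gamma_\phi^z \st z\in\sZ\}$ and $\{\tilde\gamma_\phi^z \st z\in\sZ\}$ both satisfy~(\ref{eqCondProbY}), then for each measurable $A$ they agree for $\nu_\delta$-almost every $z$; letting $A$ range over a countable generating family and removing the corresponding null sets, they agree as measures for $\nu_\delta$-almost every $z$. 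Since the $\nu_\delta$-null sets depend only on $\gamma_\delta = [\nu_\delta]$, this is intrinsic to the conditional probability space, so $\{\gamma_\phi^z \st z \in \sZ\}$ is unique in the only available sense, namely up to a $\gamma_\delta$-null set of indices.

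The hard part is not existence but pinning down what ``unique measurable family'' should mean: the $\gamma_\phi^z$ are determined only off a $\gamma_\delta$-null set, and one must confirm --- as above --- that this exceptional set is a property of $\gamma$ and not of the representative, and that a jointly measurable version (a probability kernel $z \mapsto \gamma_\phi^z$) can in fact be selected, which needs the same care and the same standing regularity as the selection step already invoked after equation~(\ref{eqCondDef}). Everything else is routine bookkeeping on top of the results established earlier in the section.
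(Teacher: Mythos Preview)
Your approach matches the paper's: this proposition is stated there as a summary of the preceding discussion, with no separate proof, and you correctly assemble exactly those pieces --- the Radon--Nikodym construction behind~(\ref{eqCondDef}), the push-forward~(\ref{eqCondProbY}), the factorization~(\ref{eqCondFact}), and the scale-invariance $[\mu]^z=\mu^z$ that makes the result depend only on $\gamma$.

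There is, however, one overclaim. You assert that ``$A \mapsto \nu_\phi^z(A)$ is, for $\nu_\delta$-almost every $z$, countably additive'' and that this ``follows exactly as in the arguments after~(\ref{eqCondDefSigma}) and~(\ref{eqCondDef}).'' No such argument is made there, and in fact the paper explicitly states, immediately \emph{after} the proposition, that it does \emph{not} follow in general that a version of $\gamma_\phi^z$ exists which is a measure for almost all $z$; a sufficient condition is that $\sY$ be a Borel space. The phrase ``measurable family of conditional measures'' in the proposition is meant in the weak sense already established: for each fixed $A$, the map $z\mapsto\gamma_\phi^z(A)$ is measurable and uniquely determined up to a $\gamma_\delta$-null set. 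Your uniqueness argument via a countable generating $\pi$-system likewise imports a countable-generation hypothesis on $\sY$ that is not assumed. So drop the claim of almost-sure countable additivity and the kernel-selection step (or flag them as requiring the Borel-space hypothesis, as the paper does), and your write-up is then a faithful expansion of the paper's summary.
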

It does not follow in general that there exists a version of
$\gamma_\phi^z$ such that this is a measure for almost all $z$.
A sufficient condition for this is that 
$\sY$ is a Borel space \citep[p.618]{SCHERVISH}.
This is the case if $\sY$ is in one-one measurable correspondence
with a measurable subset of an uncountable complete separable metric
space \citep[p.406]{ROYDEN}. 
The corresponding version of the conditional probability is
then said to be a {\em regular conditional probability}.
Integration with respect to $\gamma_\phi^z$ can nonetheless be defined
without any regularity conditions,
and the factorization given by equation~(\ref{eqCondFact}) holds in
the most general case as stated. 
The possibility of this more general integral with respect to
conditional probabilities was indicated already by
\citet[eq.10 on p.54]{KOLMOGOROV}.

\section{Statistics with Improper Priors}
\label{sStat}

Let $x$ be the observed result of an experiment.
It will be assumed that $x$ can be identified with
an element of a mathematically well defined set $\Omega_X$.
The set should include all other possible outcomes that could
have been observed as a result of the experiment.
The observed result can be a number, 
a collection of numbers organized in some way,
a continuous function,
a self-adjoint linear operator, 
a closed subset of a topological space,
or any other element of a well defined set corresponding
to the experiment under consideration. 

Assume that the sample space $\Omega_X$ is equipped
with a $\sigma$-field $\ceps_X$ so that
$(\Omega_X, \ceps_X)$ is a measurable space.
Assume furthermore that
$(\Omega_X, \ceps_X, \pr_X^\theta)$ is a 
probability space for each
$\theta$ in the model parameter space $\Omega_\Theta$.
The family 
$\{\pr_X^\theta \st \theta \in \Omega_\Theta\}$
specifies a statistical model for the experiment.

A predominant family of example in the applied statistical literature
is given by letting $\pr_X^\theta$ be the
multivariate Gaussian distribution on $\Omega_X = \RealN^N$
with covariance matrix $\Sigma (\theta)$ and mean $\mu (\theta)$
where $\Omega_\Theta \subset \RealN^K$.
The simplest special case is given by
$\Sigma = I$ and $\mu = (\theta; \ldots ; \theta)$ which
corresponds to independent sampling from the
univariate Gaussian with unknown mean $\theta$ and variance equal to $1$.
Other examples included are given by ANOVA models with fixed and
random effects, more general regression models,
structured equations models with latent variables
from the fields of psychology and economy \citep{Joreskog70sem}, 
and a variety of models from the statistical 
signal processing literature \citep{vanTrees02ArrayProcessing}.
These models correspond to specific choices of
the functional dependence on $\theta$ in 
$\Sigma (\theta)$ and $\mu (\theta)$.

The contents so far coincides with the definition
of a statistical model as found in standard statistical literature.
One exception is the choice of the notation
$\{\pr_X^\theta \st \theta \in \Omega_\Theta\}$
for the statistical model.
This choice indicates
the connection to the theory of conditional probability spaces
as will be explained by the introduction of further assumptions.

It is assumed that the statistical model is based upon an underlying
abstract conditional probability space $(\Omega, \ceps, \pr)$.
This includes the case of an underlying abstract probability space
as formulated by \citet{KOLMOGOROV} as a special case,
but seen as a conditional probability space.
It is abstract in the sense that it is assumed to exist, 
but it is not specified.
It is assumed that the model parameter space is a measurable
space $(\Omega_\Theta, \ceps_\Theta)$,
that there exists a $\sigma$-finite 
measurable $\Theta: \Omega \into \Omega_\Theta$,
and that there exists a measurable
$X: \Omega \into \Omega_X$
so that the resulting conditional probability
$\pr_X^\theta$ as defined in equation~(\ref{eqCondProbY})
coincides with the specified statistical model.

Existence of $\Omega$, $\pr$, $\Theta$, and $X$
can be proved in many concrete cases by consideration
of the product space $\Omega_X \times \Omega_\Theta$
equipped with the $\sigma$-finite measure 
$\pr_X^\theta (dx) \pi (d\theta)$
obtained from the choice of a $\sigma$-finite measure $\pi$.
This includes in particular the multivariate Gaussian example
indicated above.
As soon as existence is established it
is assumed for the further theoretical development
that  $\Omega$, $\pr$, $\Theta$, and $X$
are abstract unspecified objects with the required 
resulting statistical model as a consequence.

It can be observed that it is required that the mapping
$\theta \mapsto \pr_X^\theta (A)$
is measurable for all measurable $A$ for the
above construction to be possible.
This condition is trivially 
satisfied in most examples found in applications,
and is furthermore a typical assumption in theoretical developments.
A good example of the latter is given by the mathematical proof of the factorization theorem
for sufficient statistics \citep{HalmosSavage49}.

The basis for frequentist inference is then the observation $x$ and 
the specified statistical model $\pr_X^\theta$ based
on the underlying abstract conditional probability space 
$(\Omega,\ceps, \pr)$.

The basis for Bayesian inference is as for frequentist inference,
but the prior distribution $\pr_\Theta$ is also specified.
The basis for Bayesian inference is hence the observation $x$ 
and the joint distribution 
$\pr_{X,\Theta} (dx,d\theta) = \pr_X^\theta (dx) \pr_\Theta (d\theta)$. 
The conclusions of Bayesian inference are derived from
the posterior distribution
$\pr_\Theta^x$,
which is well defined by 
equation~(\ref{eqCondProbY})
if $X$ is $\sigma$-finite.
This result can be considered to be a very general version of 
{\bf Bayes theorem} as promised in the Abstract.
A discussion of a more elementary version involving densities is given
by \citet{TaraldsenLindqvist10ImproperPriors}.

The importance of the $\sigma$-finiteness of $X$ has also
been observed by others, 
but then as a requirement on the prior.  
\citet[p.911]{BergerBernardoSun09refprior} includes this requirement
as a part of the definition of a {\em permissible prior}.
The definition as formulated in this section can be 
used as a generalization of this part to cases
not defined by densities. 

A summary of the contents in this section is given by:


\begin{Def}[Statistical model]
\label{dStatMod}
A {statistical model} 
$\{(\Omega_X, \ceps_X, \pr_X^\theta) \st \theta \in \Omega_\Theta\}$
is specified by a family of probability spaces indexed by the model parameter space
$(\Omega_\Theta, \ceps_\Theta)$
with the additional structure defined in the following.

It is assumed that all objects are defined based on
the underlying conditional probability space $(\Omega, \ceps, \pr)$.
The observation is given by a measurable function
$X: \Omega \into \Omega_X$
and the model parameter is given by
a $\sigma$-finite measurable function
$\Theta: \Omega \into \Omega_\Theta$.
It is assumed that the family of probability measures
is given by the conditional law, so
$\pr_X^\theta (A) = \pr (X \in A \st \Theta = \theta)$.

A {Bayesian statistical model} is specified by
a statistical model together with
a specification of the prior law $\pr_\Theta$.
It is assumed that 
$X$ is $\sigma$-finite,
and then the resulting marginal law $\pr_X$
is a conditional measure 
and the resulting posterior law 
$\pr_\Theta^x (B) = \pr (\Theta \in B \st X=x)$ is well
defined.
\end{Def}

In the previous the prior $\pr_\Theta$,
the marginal $\pr_X$,
and the joint distribution $\pr_{X,\Theta}$
are \cmeasure s 
with corresponding 
conditional probability spaces
$(\Omega_\Theta, \ceps_\Theta, \pr_\Theta)$,
$(\Omega_X, \ceps_X, \pr_X)$, and
$(\Omega_{X,\Theta}, \ceps_{X,\Theta}, \pr_{X,\Theta})$.
The interpretation of the prior is in terms of the
corresponding elementary conditional laws $\pr_\Theta (\cdot \st A)$.
The same holds for the other improper laws.

Bayesian inference is essentially unique.
This is in contrast to frequentist inference which
most often offer many different possible inference procedures for a
given problem.
An analogous situation occurs in applied metrology where it is
common to have many different measurement instruments available
for the measurement of a physical quantity.
The choice of instrument depends on the actual situation and purpose
of the experiment at hand.  
\par

The previous gives a mathematical definition of 
a {\em statistical model} and
a {\em Bayesian statistical model} based on the concept of
a conditional measure.
The concept of a {\em fiducial statistical model} can also be defined based
on the same theory.
The necessary ingredients and further discussion of this have been 
presented by \citet{TaraldsenLindqvist13fidopt,TaraldsenLindqvist13fidpost}.

\section{Renyi Conditional Probability Spaces}
\label{sRenyi}

\label{sCondSpace}

\citet[p.38-]{RENYI} 
gives a definition of a conditional probability space
based on a family of objects $\mu (A \st B)$.
A condensed summary of the initial ingredients
in this theory is presented next,
but with some extensions and minor changes in the choice 
naming conventions.
The purpose is to show the close connection to the concept
of a conditional measure space as discussed in the previous two section.
The final words of Renyi on this subject are recommended
for a more thorough \citep{RENYI} and pedagogical
presentation \citep{RENYI_PROBABILITY} of the theory as formulated and
motivated by Renyi.


\begin{Def}[Bunch]
\label{dBunch}
Let $(\sX, \calF)$ be a measurable space.
A family $\cb \subset \calF$ is a bunch in $\sX$ if
\begin{enumerate}
\item $B_1, B_2 \in \cb \imply B_1 \cup B_2 \in \cb$.
\item There exist $B_1, B_2, \ldots \in \cb$ such that 
$\bigcup_i B_i = \sX$.
\item The empty set $\emptyset$ does not belong to $\cb$.
\end{enumerate}
\end{Def}

{\it Example 1} Let  $(\sX, \calF)$ be the real line equipped with the Borel
$\sigma$-field.
Let $\cb$ be the set of finite nonempty unions of open intervals on the form
$(m/2,1 + m/2)$ where $m$ are integers.
The family $\cb$ is then a countable bunch.
\hfill $\Box$
%
%


%
\begin{Def}[Renyi space]
\label{dRenyiSpace}
A Renyi space  $(\sX, \calF, \nu)$ is a measurable space
$(\sX, \calF)$ equipped with a family 
$\{\nu (\cdot \st B) \st {B \in \cb}\}$ 
of probability measures indexed by a bunch $\cb$ which fulfill   
$B_1, B_2 \in \cb$ and $B_1 \subset B_2$ 
$\imply \nu (B_1 \st B_2) > 0$, and the identity
\begin{equation}
\label{eqRenyiSpace}
\nu (A \st B_1) = \frac{\nu (A \cap B_1 \st B_2)}{\nu (B_1 \st B_2)}
\end{equation}
\end{Def}

A Renyi space  $(\sX, \calF, \nu_2)$ extends 
a Renyi space  $(\sX, \calF, \nu_1)$
by definition if $\cb_1 \subset \cb_2$ and
$\nu_1 (\cdot \st B) = \nu_2 (\cdot \st B)$ for all $B \in \cb_1$.
The extension is strict if $\cb_1 \subset \cb_2$ and $\cb_1 \neq \cb_2$.
A Renyi space is maximal if a strict extension does not exist.

{\it Example 1 (continued)}
Let $\nu (A \st B)$ be the uniform probability 
law on $B$ for each $B \in \cb$.
This gives a Renyi space $(\sX, \calF, \nu)$
where $(\sX, \calF)$ is the real line with the Borel 
$\sigma$-field.
The family 
$\{\nu (\cdot \st B)\}_{B \in \cb}$ 
is in this case a countable family of probability measures.

Let $\mu = [m]$ be the \cmeasure\ given by
Lebesgue measure $m$ on the real line.
The elementary conditional measures
$\mu (\cdot \st A)$ for admissible $A \in \ca$ defines
a Renyi space $(\sX, \calF, \mu)$
which contains the 
Renyi space $(\sX, \calF, \nu)$
in the sense that 
$\cb \subset \ca$ and
$\nu (\cdot \st B) = \mu (\cdot \st B)$
for all $B \in \cb$.
It follows from the results presented next
that $(\sX, \calF,\mu)$
is a maximal extension of
$(\sX, \calF,\nu)$.
\hfill $\Box$


It follows generally that
a \cmeasure\ space $(\sX, \calF, \mu)$ 
generates a unique 
Renyi space  $(\sX, \calF, \mu)$ through the
elementary conditional measures
$\mu (\cdot \st A)$.
The same symbol $\mu$ is here used for two different concepts. 
Further excuse for this abuse of notation 
is given by the following structure result: 
%
\begin{prop} 
\label{tRenyi}
A Renyi space generates a unique conditional measure space.
The corresponding resulting Renyi space is a maximal extension of the
initial Renyi space.
\end{prop}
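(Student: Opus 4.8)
The plan is to construct, from a given Renyi space $(\sX,\calF,\nu)$ with bunch $\cb$, a single $\sigma$-finite measure $\mu$ whose normalized elementary conditional measures $\mu(\cdot\st A)$ agree with $\nu(\cdot\st B)$ on $\cb$, and then to package $\mu$ as the conditional measure $[\mu]$. The first step is to pick a countable exhausting sequence $B_1,B_2,\ldots\in\cb$ with $\bigcup_i B_i=\sX$, which exists by part~2 of Definition~\ref{dBunch}, and to replace it by the increasing sequence $C_n=\bigcup_{i\le n}B_i$; each $C_n$ lies in $\cb$ by part~1. On $C_n$ the probability measure $\nu(\cdot\st C_n)$ is given, and the Renyi identity~(\ref{eqRenyiSpace}) with $B_1=C_n$, $B_2=C_{n+1}$ shows that $\nu(\cdot\st C_{n+1})$ restricted to subsets of $C_n$ equals $\nu(C_n\st C_{n+1})\,\nu(\cdot\st C_n)$, with $\nu(C_n\st C_{n+1})>0$ by the positivity axiom. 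Hence if we set $\mu$ on measurable subsets $A\subset C_n$ by
\begin{equation}
\mu(A)=\frac{\nu(A\st C_n)}{\prod_{k=1}^{n-1}\nu(C_k\st C_{k+1})},
\end{equation}
the value is independent of which $n$ with $A\subset C_n$ we use, so $\mu(A)=\lim_n \mu(A\cap C_n)$ defines a measure on all of $\calF$; it is $\sigma$-finite because $\mu(C_n)<\infty$ and $\sX=\bigcup_n C_n$, and each $C_n$ has $\mu(C_n)>0$, so each $C_n$ is an admissible condition.

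The second step is to check that $[\mu]$ reproduces the original Renyi data. For $B\in\cb$, there is an $n$ with $B\subset C_n$ (since $B=B\cap C_n$ once $C_n\supset B$, using part~1 again on $B\cup C_n$ if necessary — more carefully, $B\cup C_m\in\cb$ for all $m$, and choosing $m$ large gives $B\subset C_{m+1}$ after reindexing); then $0<\mu(B)<\infty$, so $B$ is admissible, and for any measurable $A$,
\begin{equation}
\mu(A\st B)=\frac{\mu(A\cap B)}{\mu(B)}=\frac{\nu(A\cap B\st C_n)}{\nu(B\st C_n)}=\nu(A\st B),
\end{equation}
where the last equality is exactly~(\ref{eqRenyiSpace}). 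So the Renyi space generated by the \cmeasure\ $[\mu]$ through its elementary conditional measures extends the given one. Uniqueness of $[\mu]$ follows because any $\sigma$-finite measure $\mu'$ with $\mu'(\cdot\st B)=\nu(\cdot\st B)$ for $B\in\cb$ must satisfy $\mu'(A)=\mu'(C_n)\,\nu(A\st C_n)$ for $A\subset C_n$, so the ratios $\mu'(A)/\mu'(A')$ are pinned down for all sets sitting inside some $C_n$, and by $\sigma$-finiteness this determines $\mu'$ up to a single global positive constant, i.e.\ determines $[\mu']=[\mu]$.

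The third step is maximality: if $(\sX,\calF,\mu')$ is a \cmeasure\ space whose Renyi space strictly extends $(\sX,\calF,\mu)$, its bunch $\ca'$ of admissible sets strictly contains $\ca$, the admissible sets of $\mu$; but I would argue that any $\mu'$ realizing the same conditional probabilities as $\mu$ on $\ca$ must be equivalent to $\mu$ as a \cmeasure\ (by the uniqueness argument just given, now applied with the larger family $\ca$ in place of $\cb$), and a \cmeasure\ has a single canonically determined family of admissible sets, namely $\{A:0<\mu(A)<\infty\}$; hence $\ca'=\ca$, contradicting strictness. Therefore no strict extension of the generated Renyi space exists, so it is maximal.

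The main obstacle I anticipate is the bookkeeping around the bunch: a bunch is only closed under \emph{finite} unions and need not be closed under the countable union used to exhaust $\sX$, so one must be careful that the $C_n$ used to define $\mu$ lie in $\cb$ (they do, being finite unions) while the exhausting union $\bigcup_n C_n=\sX$ is used only at the measure-theoretic level. A secondary subtlety is verifying that every $B\in\cb$ sits inside some $C_n$ so that $\mu(B)$ is finite and the comparison formula applies; this uses closure under finite unions together with the fact that the $C_n$ are cofinal in $\cb$ under inclusion, which should be extracted carefully from axioms~1 and~2. Once these set-theoretic points are handled, the Radon–Nikodym-style consistency is immediate from~(\ref{eqRenyiSpace}) and requires no real analysis.
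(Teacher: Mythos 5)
Your overall strategy is the same as the paper's: build a single $\sigma$-finite measure $\mu$ from the Renyi data, check via the identity~(\ref{eqRenyiSpace}) that its elementary conditionals reproduce $\nu(\cdot\st B)$ on $\cb$, and get uniqueness of $[\mu]$ from the fact that the conditionals pin down all ratios $\mu(A)/\mu(A')$. The difference is the normalization device, and that is where your write-up has a genuine gap. You define $\mu$ only on sets contained in some member of the increasing chain $C_n$, and your verification that $\mu(\cdot\st B)=\nu(\cdot\st B)$ for $B\in\cb$ requires $B\subset C_n$ for some $n$. But the $C_n$ need \emph{not} be cofinal in $\cb$: a bunch is closed under finite unions only, so it can contain a set $B$ (itself an infinite union in spirit, e.g.\ a bunch on $\IntN$ generated by all singletons together with the set of negative integers) that is contained in no $C_n$, however the exhausting sequence is chosen. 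Your parenthetical fix --- ``choosing $m$ large gives $B\subset C_{m+1}$ after reindexing'' --- does not work as stated: to get $B$ inside the chain you must change the chain to $D_n=B\cup C_n$, and then you are constructing a possibly different measure $\mu'$. What is actually needed, and missing, is a lemma that the conditional measure $[\mu]$ produced by your construction is independent of the choice of exhausting chain; this is provable (the telescoping ratios $\nu(C_n\st D_n)\prod_{k<n}\nu(C_k\st C_{k+1})/\prod_{k<n}\nu(D_k\st D_{k+1})$ are constant in $n$ by~(\ref{eqRenyiSpace})), but it is a real step, not bookkeeping, and your closing remark that cofinality ``should be extracted from axioms~1 and~2'' signals that you have misdiagnosed the difficulty: cofinality is simply false in general.

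The paper's construction sidesteps this entirely: it fixes one reference set $B_0\in\cb$ and defines $\mu(B)=\nu(B\st B_0\cup B)/\nu(B_0\st B_0\cup B)$ directly for \emph{every} $B\in\cb$, which is legitimate because $B_0\cup B\in\cb$ by closure under finite unions and $\nu(B_0\st B_0\cup B)>0$ by the positivity axiom; it then sets $\mu(A)=\nu(A\st B)\mu(B)$ for $A\subset B$ and extends by countable additivity using a disjoint decomposition subordinate to an exhausting sequence. Consistency then reduces to a single application of~(\ref{eqRenyiSpace}) with $B_2=B\cup B'$ for two competing covers $B,B'$, with no chain-dependence to worry about. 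Your uniqueness and maximality arguments are essentially those of the paper (and of its Corollary~\ref{cor1}) and are fine once the construction itself is repaired; I would recommend either adopting the $B_0\cup B$ normalization or explicitly proving chain-independence before submitting this as a complete proof.
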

\begin{proof}
Let $(\sX, \calF, \nu)$ be the Renyi space.
It will be proved that there exists a
$\sigma$-finite measure $\mu$
such that $\mu (\cdot \st B) = \nu (\cdot \st B)$
for all $B \in \cb$,
and that the
\cmeasure\ $[\mu]$ is unique.

The first step in the proof is to pick an arbitrary $B_0 \in \cb$ and
define $\mu (B) = \nu (B \st B_0 \cup B) / \nu (B_0 \st B_0 \cup B)$ 
for $B \in \cb$.
This choice gives the normalization $\mu (B_0) = 1$.
This definition is extended to measurable $A \subset B \in \cb$ by
$\mu (A) = \mu (A \cap B) = \nu (A \st B) \mu (B)$.
An arbitrary measurable $A$ can be written 
as a disjoint union of measurable $A_1, A_2, \ldots$
where each $A_i$ is contained in some set $B$ from the bunch.
The measure $\mu$ is then finally defined by
$\mu (A) = \sum_i \mu (A_i)$.

Equation~(\ref{eqRenyiSpace}) can be used to prove
that the previous definition of $\mu (A)$ based on
a $A \subset B$ for $B \in \cb$ does not depend on the choice of $B$.
This, 
and further proof of consistency and uniqueness of
$[\mu]$ is left to the reader.
An alternative is to consult the proof of a corresponding result 
given by \citet[p.40-43]{RENYI}.
\end{proof}
%
%

Two different Renyi spaces can generate the same \cmeasure\ space.
A concrete example is provided by consideration of the
bunch generated by the intervals $(m/3,m/3 +1)$ in addition
to the two bunches considered in Example 1.
It follows generally that the set of Renyi spaces based on a given measurable
space is strictly larger than the set of \cmeasure s on the
measurable space.

\begin{cor}
\label{cor1}
A Renyi space has a unique extension to a maximal Renyi space.
The set of maximal Renyi spaces is in one-one correspondence
with the set of \cmeasure\ spaces.
\end{cor}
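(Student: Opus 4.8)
The plan is to assemble the two passages already available — call $\Phi$ the map of Proposition~\ref{tRenyi} that sends a Renyi space to the \cmeasure\ space it generates, and $\Psi$ the map that sends a \cmeasure\ space $(\sX,\calF,\gamma)$ to its Renyi space of elementary conditional measures $\gamma(\cdot\st A)$, $A$ admissible — and to show that $\Psi\circ\Phi$ produces the maximal extension while $\Phi$ and $\Psi$ restrict to mutually inverse bijections between the maximal Renyi spaces and the \cmeasure\ spaces on $(\sX,\calF)$.

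First I would isolate two facts that follow quickly from Proposition~\ref{tRenyi}. (a) $\Phi$ is invariant under extension: if $(\sX,\calF,\nu_2)$ extends $(\sX,\calF,\nu_1)$, then $\Phi(\nu_2)=\Phi(\nu_1)$. Indeed Proposition~\ref{tRenyi} characterises $\Phi(\nu)$ as the \emph{unique} \cmeasure\ whose elementary conditional measures agree with $\nu(\cdot\st B)$ for every $B$ in the bunch; since $\cb_1\subset\cb_2$ and the two Renyi families coincide on $\cb_1$, the \cmeasure\ $\Phi(\nu_2)$ also has this defining property relative to $\cb_1$, so uniqueness forces $\Phi(\nu_2)=\Phi(\nu_1)$. (b) $\Phi\circ\Psi=\mathrm{id}$ on \cmeasure\ spaces: fixing a representative $\mu_0$ of $\gamma$ and an admissible $B_0$, the representative $\mu$ built in the proof of Proposition~\ref{tRenyi} satisfies, for $B$ in the bunch of admissible sets, $\mu(B)=\nu(B\st B_0\cup B)/\nu(B_0\st B_0\cup B)=\mu_0(B)/\mu_0(B_0)$, whence $\mu=\mu_0/\mu_0(B_0)$ and $[\mu]=\gamma$.

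Existence and uniqueness of the maximal extension then come out as follows. Existence is the second sentence of Proposition~\ref{tRenyi}: $\Psi(\Phi(\nu))$ is a maximal extension of $\nu$. For uniqueness, let $\nu'$ and $\nu''$ be maximal extensions of $\nu$. By (a), $\Phi(\nu')=\Phi(\nu)=\Phi(\nu'')$, hence $\Psi(\Phi(\nu'))=\Psi(\Phi(\nu''))$. But $\Psi(\Phi(\nu'))$ is an extension of $\nu'$ (again by Proposition~\ref{tRenyi}) and $\nu'$ is maximal, so $\Psi(\Phi(\nu'))=\nu'$; likewise $\Psi(\Phi(\nu''))=\nu''$; therefore $\nu'=\nu''$. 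In particular $\Psi(\Phi(\nu))=\nu$ whenever $\nu$ is already maximal.

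For the one-one correspondence I would first note that $\Psi$ lands among maximal Renyi spaces: for a \cmeasure\ $\gamma$, Proposition~\ref{tRenyi} makes $\Psi(\Phi(\Psi(\gamma)))$ a maximal extension of $\Psi(\gamma)$, and by (b) $\Phi(\Psi(\gamma))=\gamma$, so $\Psi(\gamma)=\Psi(\Phi(\Psi(\gamma)))$ is its own maximal extension, hence maximal. It then remains to see that $\Phi$ (restricted to maximal Renyi spaces) and $\Psi$ (into maximal Renyi spaces) are mutually inverse: $\Phi\circ\Psi=\mathrm{id}$ is exactly (b), and $\Psi\circ\Phi=\mathrm{id}$ on maximal Renyi spaces was recorded at the end of the previous paragraph. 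The main obstacle is not any single hard step but making sure the uniqueness clause of Proposition~\ref{tRenyi} is genuinely available in the strong form used in (a), and verifying the one-line identity in (b) that the generated measure is literally a positive multiple of the original representative; once these are in place the corollary is pure bookkeeping on top of Proposition~\ref{tRenyi}.
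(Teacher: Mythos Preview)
Your proof is correct and follows essentially the same route as the paper: pass from a Renyi space to its generated \cmeasure\ space (your $\Phi$), observe that any extension generates the same \cmeasure\ (your fact (a), which is exactly the paper's one-line uniqueness argument), and read off the maximal extension as the Renyi space of admissible conditions (your $\Psi\circ\Phi$). The paper's proof is terser and leaves the bijection largely implicit, whereas you have spelled out both $\Phi\circ\Psi=\mathrm{id}$ and $\Psi\circ\Phi=\mathrm{id}$ on maximal spaces; this is added detail, not a different idea.
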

\begin{proof}
Let  $(\sX, \calF, \nu)$ be a Renyi space and
let $(\sX, \calF, [\mu])$ be the corresponding 
generated \cmeasure\ space.
The Renyi space $(\sX, \calF, [\mu])$ given by the set of
admissible conditions $\ca$ is then a unique maximal extension.
Uniqueness and maximality follows since
any Renyi space that contains  $(\sX, \calF, \nu)$
will generate the \cmeasure\ space $(\sX, \calF, [\mu])$
by the construction given in the proof of Proposition~\ref{tRenyi}.
\end{proof}

A more general concept of a conditional probability space was
originally introduced by \citet{Renyi55axioms},
and a corresponding more general structure theorem 
was proved by \citet{Csaszar55struct}.
\citet[p.95]{RENYI} refers to these more general spaces as
generalized conditional probability spaces.
They are truly more general and
a generalized conditional probability space is not necessarily generated by a single
$\sigma$-finite measure.


\section{Discussion}
\label{sDiscussion}

The distinction between a $\sigma$-finite measure space and the
corresponding \cmeasure\ space could at first sight seem trivial.
For a $\sigma$-finite measure $\mu$ the corresponding
\cmeasure\ $\nu = [\mu]$ is an equivalence class of 
$\sigma$-finite measures in the set of all $\sigma$-finite measures on
the measurable space $\sX$.
It follows,
as stated earlier, 
that any topology on the set of $\sigma$-finite measures
gives a corresponding quotient topology on the set 
of \cmeasure s.
Convergence of $\sigma$-finite measures is different from
convergence of the corresponding conditional measures.
This is also true if the initial $\sigma$-finite measure is a
probability measure.

An alternative is to consider 
the \cmeasure\ space as a maximal Renyi space,
and this is a concept more clearly distinct from that of a 
$\sigma$-finite measure space.
Convergence concepts for Renyi spaces can be studied directly,
and initial work on this has been done by \citet{RENYI}.
He shows in particular that any conditional measure can be obtained
as a limit of conditional measures corresponding to probability
measures in a reasonable topology.
The study of convergence concepts exemplify an important
difference between $\sigma$-finite measures and
\cmeasure s.
This is left for the future.

The distinction between a $\sigma$-finite measure space and the
corresponding \cmeasure\ space 
can also be seen by analogy with the construction of projective spaces.
The projective space $\tP^n (\RealN)$ as a set is
the set of lines through the origin $0$ in $\RealN^{n+1}$.
It is hence equal to the set of
equivalence classes 
$[x] = \{\lambda x \st \lambda \in \RealN \setminus \{0\}, x \in
\RealN^{n+1}\setminus \{0\} \}$ in $\RealN^{n+1}\setminus \{0\}$.
The set of \cmeasure s on a measurable space
is hence different from the set of $\sigma$-finite measures just
as a projective space is different from the space on which it is constructed.

The presented theory is in line with the arguments given by \citet{JEFFREYS}.
He argues that improper priors are necessary to get the Bayesian
machine up and running.
This point of view can be disputed, 
but it is indisputable that usage of improper 
priors flourish in the statistical literature.  
There is hence a need for a theory that includes improper priors.

\citet{LINDLEY}, 
apparently in line with the view of the current authors,
found that the theory of Renyi is a natural starting
point for statistical theory.
In the Preface to his classical text on probability he writes:
\begin{quote} 
The axiomatic structure used here is not the usual one associated
with the name of Kolmogorov. Instead one based on the
ideas of Renyi has been used. The essential difference between
the two approaches is that Renyi's is stated in terms of conditional
probabilities, whereas Kolmogorov's is in terms of
absolute probabilities, and conditional probabilities are defined
in terms of them. Our treatment always refers to the probability
of A, given B, and not simply to the probability of A. In my
experience students benefit from having to think of probability
as a function of two arguments, A and B, right from the beginning.
The conditioning event, B, is then not easily forgotten
and misunderstandings are avoided. These ideas are particularly
important in Bayesian inference where one's views are
influenced by the changes in the conditioning event.
\end{quote}

\citet{LINDLEY} refers to an earlier 
German edition of
the book cited here \citep{RENYI62}.
The two books \citep{RENYI_PROBABILITY,RENYI}
represent the final view of Renyi regarding conditional probability
spaces,
but the basis for the theory development are found in earlier articles
\citep{RenyiTuran76selectedpapers,Renyi55axioms}.
The extension given by conditioning on $\sigma$-finite 
statistics and $\sigma$-finite $\sigma$-fields is not treated by Renyi.

\vekk{
As a final concrete example
consider a real random quantity $\Theta$
such that $\pr_\Theta (E \st B) = m (E \cap B) / m(B)$
where $m$ is Lebesgue measure on the real line.
The family $\cal B$ of sets $B$ can be taken to be the family
of finite nonempty unions of finite open intervals
with rational end points.
This is then a countable bunch of events as in Definition~\ref{dBunch}.
The interpretation of this is that if it is known that
$\Theta$ has values in a finite interval $(a,b)$,
then the corresponding conditional law is the uniform law on the interval.
This can be interpreted in a subjective or a frequentist sense
depending on the concrete problem at hand.
The structure theorem of Renyi formulated here as Proposition~\ref{tRenyi}
ensures that this countable family of conditional probability laws
can be uniquely extended so that it
corresponds to the \cmeasure\ law 
$\pr_\Theta = [m]$ given by the family of measures equivalent to
Lebesgue measure.  
The prior $\pr_\Theta$ is not equal to Lebesgue measure,
but to the equivalence class generated by Lebesgue measure.
This equivalence class was also obtained in Example 1 starting
from a much smaller Renyi space.
}

The structure theorem shows in general that 
a family of conditional probabilities that satisfies 
the axioms of a Renyi space given in Definition~\ref{dRenyiSpace}
can be extended 
so that it corresponds 
to a unique maximal Renyi space which 
can be identified
with a \cmeasure\ space.
The family of conditional probabilities
gives intuitive motivation and interpretation for usage of
improper laws in probability and statistics. 
In this theory any marginal law corresponds to a 
conditional probability space.
All probabilities are conditional probabilities.

\bibliography{bib,gtaralds}

\begin{thebibliography}{23}
\providecommand{\natexlab}[1]{#1}
\providecommand{\url}[1]{\texttt{#1}}
\expandafter\ifx\csname urlstyle\endcsname\relax
  \providecommand{\doi}[1]{doi: #1}\else
  \providecommand{\doi}{doi: \begingroup \urlstyle{rm}\Url}\fi

\bibitem[Berger et~al.(2009)Berger, Bernardo, and
  Sun]{BergerBernardoSun09refprior}
James~O. Berger, Jos\'{e}~M. Bernardo, and Dongchu Sun.
\newblock {The Formal Definition of Reference Priors}.
\newblock \emph{The Annals of Statistics}, 37\penalty0 (2), 2009.

\bibitem[Cs\'{a}sz\'{a}r(1955)]{Csaszar55struct}
\'{A}kos Cs\'{a}sz\'{a}r.
\newblock {Sur la structure des espaces de probabilit\'{e} conditionnelle}.
\newblock \emph{Acta Mathematica Academiae Scientiarum Hungarica}, 6:\penalty0
  337--361, 1955.

\bibitem[Dawid et~al.(1973)Dawid, Stone, and Zidek]{DawidStoneZidek73}
A.~P. Dawid, M.~Stone, and J.~V. Zidek.
\newblock {Marginalization Paradoxes in Bayesian and Structural Inference}.
\newblock \emph{Journal of the Royal Statistical Society Series B-Statistical
  Methodology}, 35\penalty0 (2):\penalty0 189--233, 1973.

\bibitem[Halmos and Savage(1949)]{HalmosSavage49}
P.~Halmos and L.~J. Savage.
\newblock {Application of the Radon-Nikodym theorem to the theory of sufficient
  statistics}.
\newblock \emph{Annals of Mathematical Statistics}, 20:\penalty0 225--241,
  1949.

\bibitem[Hartigan(1983)]{HARTIGAN}
J.~Hartigan.
\newblock \emph{{Bayes theory}}.
\newblock Springer, 1983.

\bibitem[Jeffreys(1939)]{JEFFREYS}
H.~Jeffreys.
\newblock \emph{{Theory of probability (1966 ed)}}.
\newblock Oxford, third edition, 1939.

\bibitem[J\"{o}reskog(1970)]{Joreskog70sem}
K.~G. J\"{o}reskog.
\newblock {A general method for analysis of covariance structures}.
\newblock \emph{Biometrika}, 57\penalty0 (2):\penalty0 239--251, August 1970.

\bibitem[Kolmogorov(1933)]{KOLMOGOROV}
A.~Kolmogorov.
\newblock \emph{{Foundations of the theory of probability}}.
\newblock Chelsea edition (1956), second edition, 1933.

\bibitem[Lavine and Hodges(2012)]{LavineHodges12icar}
Michael~L. Lavine and James~S. Hodges.
\newblock {On Rigorous Specification of ICAR Models}.
\newblock \emph{The American Statistician}, 66\penalty0 (1):\penalty0 42--49,
  2012.

\bibitem[Lindley(1965)]{LINDLEY}
D.~V. Lindley.
\newblock \emph{{Introduction to Probability and Statistics from a Bayesian
  Viewpoint(Cambridge 2008 reprint)}}, volume I-II.
\newblock Cambridge University Press, 1965.

\bibitem[Renyi(1962)]{RENYI62}
A.~Renyi.
\newblock \emph{{Wahrscheinlichkeitsrechnung (Probability theory 1970)}}.
\newblock Deutscher Verlag der Wissenschaften, Berlin, 1962.

\bibitem[Renyi(1970{\natexlab{a}})]{RENYI}
A.~Renyi.
\newblock \emph{{Foundations of Probability}}.
\newblock Holden-Day, 1970{\natexlab{a}}.

\bibitem[Renyi and Turan(1976)]{RenyiTuran76selectedpapers}
A.~Renyi and P.~Turan.
\newblock \emph{{Selected papers of Alfred Renyi}}, volume I-III.
\newblock Akademiai Kiado, Budapest, 1976.

\bibitem[Renyi(1955)]{Renyi55axioms}
Alfred Renyi.
\newblock {On a new axiomatic theory of probability}.
\newblock \emph{Acta Mathematica Hungarica}, 6\penalty0 (3):\penalty0 285--335,
  September 1955.

\bibitem[Renyi(1970{\natexlab{b}})]{RENYI_PROBABILITY}
Alfred Renyi.
\newblock \emph{{Probability Theory}}.
\newblock Dover Publications, dover ed (2007) edition, May 1970{\natexlab{b}}.

\bibitem[Royden(1989)]{ROYDEN}
H.~L. Royden.
\newblock \emph{{Real Analysis}}.
\newblock Macmillan, third edition, 1989.

\bibitem[Rudin(1987)]{RUDIN}
W.~Rudin.
\newblock \emph{{Real and Complex Analysis}}.
\newblock McGraw-Hill, 1987.

\bibitem[Schervish(1995)]{SCHERVISH}
M.~J. Schervish.
\newblock \emph{Theory of {S}tatistics}.
\newblock Springer, 1995.

\bibitem[Stone and Dawid(1972)]{StoneDawid72}
M.~Stone and A.~P. Dawid.
\newblock {Un-Bayesian Implications of Improper Bayes Inference in Routine
  Statistical Problems}.
\newblock \emph{Biometrika}, 59\penalty0 (2):\penalty0 369--375, 1972.

\bibitem[Taraldsen and Lindqvist(2010)]{TaraldsenLindqvist10ImproperPriors}
G.~Taraldsen and B.~H. Lindqvist.
\newblock {Improper Priors Are Not Improper}.
\newblock \emph{The American Statistician}, 64\penalty0 (2):\penalty0 154--158,
  2010.

\bibitem[Taraldsen and
  Lindqvist(2013{\natexlab{a}})]{TaraldsenLindqvist13fidopt}
G.~Taraldsen and B.~H. Lindqvist.
\newblock {Fiducial theory and optimal inference}.
\newblock \emph{Annals of Statistics}, 41\penalty0 (1):\penalty0 323--341,
  2013{\natexlab{a}}.

\bibitem[Taraldsen and
  Lindqvist(2013{\natexlab{b}})]{TaraldsenLindqvist13fidpost}
G.~Taraldsen and B.~H. Lindqvist.
\newblock {Fiducial and Posterior Sampling}.
\newblock \emph{Communications in Statistics:Theory and Methods (accepted)},
  2013{\natexlab{b}}.

\bibitem[Van~Trees(2002)]{vanTrees02ArrayProcessing}
H.~L. Van~Trees.
\newblock \emph{{Optimum Array Processing (Detection, Estimation, and
  Modulation Theory, Part IV)}}.
\newblock Wiley-Interscience, 1 edition, March 2002.

\end{thebibliography}
\bibliographystyle{plainnat} 

\end{document}